\newtheorem{proposition}{Proposition}
\newdefinition{definition}{Definition}
\newtheorem{theorem}{Theorem}
\newproof{proof}{Proof}
\newdefinition{remark}{Remark}
\begin{document}
	\title{Low rank approximate solutions to large-scale differential matrix Riccati equations}
	
\author[istanbul]{Y. G\"{u}ldo\u{g}an}
\ead{guldogan@yildiz.edu.tr}
\author[lille]{M. Hached \corref{cor1}}
\ead{mustapha.hached@univ-lille1.fr}
\author[calais]{K. Jbilou }
\ead{jbilou@lmpa.univ-littoral.fr}
\author[istanbul]{M. Kurulay}
\ead{mkurulay@yildiz.edu.tr}

	\cortext[cor1]{Corresponding author}
	
	\begin{abstract}
		In the present paper, we consider large-scale continuous-time differential matrix Riccati equations. To the authors' knowledge, the two main approaches proposed in the litterature are based on a splitting scheme or on a Rosenbrock / Backward Differentiation Formula (BDF)  methods.  The approach we propose is based on the reduction of the problem dimension prior to integration. We project the initial problem onto an extended block Krylov subspace and obtain a low-dimensional differential matrix Riccati equation. The latter matrix differential problem is then solved by a Backward Differentiation Formula (BDF) method and the obtained solution is used to reconstruct an approximate solution of the original problem. This process is repeated, increasing the dimension of the projection subspace until achieving a chosen accuracy. We give some theoretical results and a simple expression of the residual  allowing the implementation of a stop test in order to limit the dimension of the projection space. Some numerical experiments will be given.
	\end{abstract}

		\cortext[cor1]{Corresponding author}

		\begin{keyword}
			Extended block Krylov, Low rank approximation, Differential matrix Riccati equations.
		\end{keyword}
		
			\address[istanbul]{Department of Mathematical Engineering, Yildiz Technical University, Davutpasa Kamp\"{u}s\"{u}, Istanbul, Turkey}	
		\address[lille]{Laboratoire Painlev\'e UMR 8524 (ANO-EDP), UFR Math\'ematiques, Universit\'e des Sciences et Technologies de Lille, IUT A D\'epartement Chimie, Rue de la Recherche (lieu-dit "Le Recueil"),
			BP 179 - 59653 Villeneuve d'Ascq Cedex, France}
		\address[calais]{Laboratoire de Math\'ematiques Pures et
		Appliqu\'ees, Universit\'e du Littoral C\^ote d'Opale, 50 Rue F. Buisson, BP 699 - 62228
		Calais cedex, France.}	
	
\maketitle

\date{}

\vspace{-6pt}

\section{Introduction}
In this paper, we consider the  continuous-time differential  matrix Riccati equation (DRE in short) on the time interval $[0,T_f]$ of the form

\begin{equation}\label{ric1}
\left\{\begin{aligned}
\dot X(t) & =A^T\,X(t)+X(t)\,A-X(t)\,B\,B^T\,X(t)+C^T\,C \\
X(0)&=X_0  \\
\end{aligned}
\right.
\end{equation}
where  $X_0$ is some given $n \times n$ low-rank matrix, $X$ is the unknown matrix function, 
$ A \in \mathbb{R}^{n \times n} $ is assumed to be large, sparse and nonsingular, $ B \in
\mathbb{R}^{n \times \ell} $ and $ C \in \mathbb{R}^{s \times n} $. The matrices $ B $ and $ C $ are assumed to have full rank with  $\ell, s \ll n $. \\
Differential Riccati equations play a fundamental role in many areas such as control,
filter design theory, model reduction problems, differential
equations and robust control problems \cite{abou03,lancaster95,mehrmann91}. Differential  matrix Riccati  equations are also involved in game theory, wave propagation and scattering theory such as boundray value problems; see \cite{abou03,camo}. In the last decades,  some numerical methods have been proposed for approximating solutions of large scale algebraic Riccati equations \cite{benner98,heyouni09,jbilou03,jbilou06,simoncini07}.\\
Generally, the matrices $A$, $B$ and $C$ are obtained from the
discretization of operators defined on infinite dimensional
subspaces. Moreover, the matrix $A$ is generally sparse, banded
and very large. For such problems, only a few attempts have been made to solve Equation (\ref{ric1}), see \cite{benner04} for instance. \\

\noindent In the present paper, we propose a projection method onto Krylov subspaces. The idea is to project the initial differential Riccati equation onto an extended block Krylov subspace of small dimension, solve the  obtained  low dimensional differential matrix equation and get  approximate solutions to the initial differential matrix equations. \\

\noindent An  expression of the solution of equation \eqref{ric1} is avalaible under some assumptions on the coefficient matrices $A$, $B$ and $C$, see \cite{anderson71} for more details. This result can be stated as follows.

\begin{theorem}\label{exactformula}
Assuming that $(A,B)$ is stabilizable and $(C,A)$ is observable and provided that $X(0)>0$, the differential Riccati equation \eqref{ric1} admits a unique solution $X$ given by
\begin{equation}\label{solexacte1}
X(t)=\widetilde{X}+e^{t \widetilde{A}^T }[e^{t \widetilde{A} }\widetilde{Z} e^{t \widetilde{A}^T} + (X_0 - \widetilde{X})^{-1}-\widetilde{Z} ]^{-1}  e^{t \widetilde{A}^T}
\end{equation}
where $\widetilde{X}$ is the positive definite solution of the ARE,

\begin{equation}
\label{solexacte2}
A^T\widetilde{X}+\widetilde{X}A-\widetilde{X}BB^T\widetilde{X}+C^TC=0,
\end{equation}
$$\widetilde{A}=A-BB^T\widetilde{X}$$
and $\widetilde{Z}$ is the positive definite solution of the Lyapunov equation
$$\widetilde{A}Z+Z\widetilde{A}^T-BB^T=0$$
\end{theorem}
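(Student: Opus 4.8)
The plan is to reduce the quadratic matrix differential equation \eqref{ric1} to a linear one by two successive changes of variable and then integrate the linear equation explicitly. First I would shift the unknown by the algebraic equilibrium: setting $Y(t) = X(t) - \widetilde{X}$ and substituting into \eqref{ric1}, the constant term $C^TC$ together with the purely static part $-\widetilde{X}BB^T\widetilde{X}$ of the quadratic term is annihilated by the ARE \eqref{solexacte2}. Collecting the mixed linear terms with the help of $\widetilde{A} = A - BB^T\widetilde{X}$ (so that $\widetilde{A}^T = A^T - \widetilde{X}BB^T$) leaves the matrix Bernoulli equation
\begin{equation*}
\dot Y(t) = \widetilde{A}^T Y(t) + Y(t)\widetilde{A} - Y(t)BB^T Y(t), \qquad Y(0) = X_0 - \widetilde{X}.
\end{equation*}

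Next I would linearize this Bernoulli equation. On any subinterval where $Y(t)$ is invertible, I set $P(t) = Y(t)^{-1}$ and use $\frac{d}{dt}(Y^{-1}) = -Y^{-1}\dot Y\, Y^{-1}$; the cubic term $YBB^TY$ collapses to the constant $BB^T$ after the surrounding inversions cancel, and the whole system reduces to the linear differential Lyapunov equation
\begin{equation*}
\dot P(t) + \widetilde{A} P(t) + P(t)\widetilde{A}^T = BB^T, \qquad P(0) = (X_0 - \widetilde{X})^{-1}.
\end{equation*}
Its unique stationary solution is precisely $\widetilde{Z}$, the solution of the Lyapunov equation in the statement, which exists and is positive definite because $\widetilde{A}$ is Hurwitz, a standard consequence of the stabilizability of $(A,B)$ and observability of $(C,A)$ (these hypotheses also yield the existence, uniqueness and positive definiteness of $\widetilde{X}$). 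Writing $P(t)-\widetilde{Z}$ for the deviation from equilibrium makes the equation homogeneous, whose solution propagates as $e^{-t\widetilde{A}}\big[(X_0-\widetilde{X})^{-1}-\widetilde{Z}\big]e^{-t\widetilde{A}^T}$, so that
\begin{equation*}
P(t) = \widetilde{Z} + e^{-t\widetilde{A}}\big[(X_0 - \widetilde{X})^{-1} - \widetilde{Z}\big]e^{-t\widetilde{A}^T}.
\end{equation*}

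Finally I would invert back. Factoring $e^{-t\widetilde{A}}$ on the left and $e^{-t\widetilde{A}^T}$ on the right of $P(t)$ and taking inverses gives
\begin{equation*}
Y(t) = P(t)^{-1} = e^{t\widetilde{A}^T}\big[e^{t\widetilde{A}}\widetilde{Z}e^{t\widetilde{A}^T} + (X_0 - \widetilde{X})^{-1} - \widetilde{Z}\big]^{-1}e^{t\widetilde{A}},
\end{equation*}
and adding $\widetilde{X}$ recovers the announced expression \eqref{solexacte1}, the two outer exponential factors being mutual transposes as required for $X(t)$ to stay symmetric.

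The step I expect to be the main obstacle is justifying that the substitution $P = Y^{-1}$ is legitimate on the whole interval $[0,T_f]$, i.e. that $Y(t) = X(t) - \widetilde{X}$ never becomes singular. I would settle this by a connectedness argument: $Y(0) = X_0 - \widetilde{X}$ is nonsingular, the set of $t$ where $Y(t)$ is invertible is open by continuity, and on that set the explicit formula for $P(t)$ shows it remains bounded and nonsingular (using the Hurwitz property of $\widetilde{A}$ and the positive definiteness of $\widetilde{Z}$), so the invertible set is also closed in $[0,T_f]$ and hence all of it. This simultaneously certifies that the bracketed matrix inverted in \eqref{solexacte1} is nonsingular throughout, so the formula is well defined, and the global existence and uniqueness of $X$ then follow from those of the linear problem for $P$.
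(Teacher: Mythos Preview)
The paper does not actually prove this theorem; it is quoted with a reference to Anderson--Moore \cite{anderson71} and then immediately remarked to be unsuitable for large-scale computation. Your derivation is the classical one found in such references: shift by the algebraic equilibrium $\widetilde{X}$ to obtain a matrix Bernoulli equation, invert the unknown to linearize, solve the resulting differential Lyapunov equation in closed form, and undo the substitutions. The algebraic steps are carried out correctly; in particular you obtain $e^{t\widetilde{A}}$ as the rightmost outer factor, which is what symmetry of $X(t)$ requires, and the paper's printed formula with $e^{t\widetilde{A}^T}$ on both sides appears to be a typographical slip.

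The only place your sketch is thin is the invertibility of $Y(t)=X(t)-\widetilde{X}$. Two points. First, the hypothesis $X_0>0$ alone does not force $X_0-\widetilde{X}$ to be nonsingular (take $X_0=\widetilde{X}$), so already at $t=0$ there is an implicit extra assumption hidden in the formula. Second, in your closedness step you appeal to the Hurwitz property of $\widetilde{A}$ for boundedness of $P(t)$, but since $\widetilde{A}$ is stable it is $e^{t\widetilde{A}}$ that decays while $e^{-t\widetilde{A}}$ grows, so that property does not help; on the finite interval $[0,T_f]$ boundedness is automatic anyway, and what you actually need is that $P(t)$ never becomes singular, which positivity of $\widetilde{Z}$ alone does not guarantee when $(X_0-\widetilde{X})^{-1}-\widetilde{Z}$ is indefinite. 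A cleaner way to close the argument is to invoke the independently known global existence of $X(t)$ together with a Riccati comparison/monotonicity result to control the sign of $X(t)-\widetilde{X}$; alternatively, one simply records the formula as valid whenever the bracketed matrix is invertible, which is how it is usually presented in the control literature.
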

Unfortunately, the formula  \eqref{solexacte1} is not suitable for large scale problems as it requires the computation of a matrix exponential, of an inverse matrix and various products of matrices.  \\

The paper is organized as follows: In Section 2, we give some basic facts about  the differential Riccati equation and the underlying  finite-horizon LQR problem associated to a dynamical system and its  cost function . In Section 3, we recall  the extended block Arnoldi algorithm with some usefull  classical algebraic properties. Section 4 is devoted to the BDF integration method that allows one to solve numerically differential Riccati equations. In Section 5, we introduce a new approach for the numerical resolution of a differential Riccati equation, based on a projection  onto a sequence of block extended Krylov subspaces. The initial differential Riccati equation is  projected  onto such a subspace to get a low dimensional differential Riccati equation that is solved by the BDF method. We give some theoretical results on the norm of the residual and on  the error. The projection finite horizon LQR problem is studied in Section 6.  In the last section, we give some numerical experiments and also comparisons with different approaches.\\

\noindent Throughout this paper, we use the following notations: The 2-norm
of matrices will be denoted by $ \|\,. \, \| $. .  Finally, $
I_r $ and $ O_{r \times l} $
 will denote the identity of size $ r \times r $ and the zero matrix of size $ r \times l $, respectively.

\section{The finite-horizon LQR problem}

The Linear Quadratic Regulator (LQR) problem is a well known design technique in the  theory of optimal control. The  system dynamics are described by a set of linear differential equations and the cost function  is  a quadratic function. \\
\noindent A linear quadratic regulator (LQR) problem can be described   as follows.  Let $ x(t) $ be the state vector of dimension $ n $, $ u(t) \in \mathbb{R}^p $ the control vector
 and $ y(t)$ the output vector of length $ s $. We consider the following LQR problem  with finite time-horizon (the continuous case ) \cite{abou03,corless,mehrmann91,stillfjord15}: \\

\noindent  For each initial state $x_0$, find the optimal cost $J(x_0,u)$ such that:
\begin{equation} \label{min}
J(x_0,u) =\inf_u \left\{\int_0^{T_f} \left ( y(t)^T\,y(t)+u(t)^T\,u(t) \right )\,dt\right\},
\end{equation}
under the dynamic constrains
\begin{equation}
\label{lti1}
\left\{ \begin{array}{lcl}
 \dot x(t) &=& A\,x(t)+B\,u(t), \;\;\; x(0)=x_0. \\
 y(t) &=& C\,x(t)
 \end{array} 
 \right .
\end{equation}
In addition, when a minimum exists, find an optimal input control  $ \hat u(t)$ which achieves this minimum, that is
\begin{equation}\label{Jmin}
J(x_0,\hat u) = \int_0^{T_f} \left ( \hat y(t)^T\,\hat y(t)+\hat u(t)^T\,\hat u(t) \right )\,dt,
\end{equation}
where the optimal state $\hat x$ and the corresponding output $\hat y$ satisfy \eqref{lti1}.\\
Assuming that the pair $ (A,B) $ is stabilizable (\textit{i.e.} there exists a matrix $ S $
such that $ A-B\,S $ is stable) and the pair $ (C,A) $ is detectable (i.e., $ (A^T,C^T) $
stabilizable),  the optimal  input $ \hat u $ minimizing the functional $ J(x_0,u)$  can be determined through a feedback operator $ K $ such
that the feedback law is given by $ \hat u(t) = K\, \hat x(t) $, where $ K = -B^T\,P(t) $ and $ P(t) \in \mathbb{R}^{n \times n} $
is the unique solution to  the  following differential Riccati equation 
\begin{equation}
\label{ric2}
{\dot P} + A^TP+PA-PBB^TP+C^TC=0;\;\; P(T_f)=0.
\end{equation}
 In addition, the optimal state trajectory satisfies $ \dot {\hat x}(t) = (A-BB^T P(t) )\hat x(t)$ and can also be expressed as
 $$  {\hat x}(t) = e^{tA}  {\hat x}_0 + \displaystyle \int_0^t e^{(t-\tau)A}Bu(\tau)d \tau.$$
 The  optimal cost is given by the following quadratic function of the initial state $x_0$; (see \cite{corless})
\begin{equation}\label{optim}
J(x_0,\hat u)  = x_0^T P(0) x_0.
\end{equation}
We notice that if we set $X(t)=P(T_f-t)$ and $X_0=0$, then $$P(t)=X(T_f-t).$$ Then we recover the solution $X$ to  the  differential Riccati equation \eqref{ric1} and  
$$J(x_0,\hat u)  = x_0^T X(T_f) x_0.$$
For a thorough study on  the existence and uniqueness of the solution of the DRE \eqref{ric1}, see \cite{abou03,astrom,corless}.\\
These results are summarized in the following theorem; (see \cite{corless})
\begin{theorem}
\label{optim}
Assume that the pair $(A,B)$ is stabilizable and the pair $(A,C)$ is detectable. Then, the differential matrix Riccati equation \eqref{ric1} has a unique positive  solution $X$ on $[0,\, T_f]$ and for any initial state $x_0$, the optimal cost of $J(x_0,u)$ is given by
$$J(x_0,\hat u)  = x_0^T X(T_f) x_0,$$
where the optimal control is given by
$$\hat u(t)= -B^TX(T_f-t)\hat x(t),$$
and the optimal trajectory is determined by
$${\dot {\hat x}}(t)=(A-BB^TX(T_f-t))\hat x(t),\;{\rm with}\; \hat x(0)=x_0.$$
\end{theorem}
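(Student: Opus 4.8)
The plan is to establish the two assertions in turn: first the well-posedness (existence, uniqueness and positivity of $X$ on the whole interval $[0,T_f]$), then the optimal-control characterization by a verification (``completion of squares'') argument. It is cleanest to work with the equivalent time-reversed equation \eqref{ric2}, since $P(t)=X(T_f-t)$ solves \eqref{ric2} with terminal data $P(T_f)=0$ exactly when $X$ solves \eqref{ric1} with $X_0=0$; the final formulas in the $X$-variable then follow by the substitution $t\mapsto T_f-t$, so that $P(0)=X(T_f)$.

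For existence and uniqueness I would first invoke Picard--Lindel\"of: the right-hand side of \eqref{ric2}, viewed as a map $P\mapsto -A^TP-PA+PBB^TP-C^TC$, is a polynomial (hence locally Lipschitz) function of the entries of $P$ and preserves symmetry, so a unique symmetric local solution exists. The real work is to rule out finite-time blow-up of the quadratic term so that the solution extends to all of $[0,T_f]$. Here I would derive an a priori bound using the control interpretation itself: $x_0^TP(t)x_0$ coincides with an optimal cost over the shortened horizon $[t,T_f]$, which is nonnegative (yielding positive semidefiniteness) and, by stabilizability of $(A,B)$, is bounded above by the finite cost of a fixed stabilizing feedback (yielding a uniform upper bound). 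A uniformly bounded symmetric solution cannot escape in finite time, so the solution is global on $[0,T_f]$, and detectability of $(A,C)$ upgrades positive semidefiniteness to positive definiteness. This monotonicity/comparison step is the main obstacle; rather than rebuild the Riccati comparison theory, the cleanest route is to cite the standard results (as in \cite{corless,abou03}).

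The second part is a direct verification computation. Along any trajectory of \eqref{lti1} driven by an admissible control $u$, I would differentiate $t\mapsto x(t)^TP(t)x(t)$, substitute $\dot x=Ax+Bu$ together with \eqref{ric2}, and cancel the terms involving $A$ to obtain the key identity
\begin{equation}
\frac{d}{dt}\big(x^TPx\big)= -\,x^TC^TCx-u^Tu+\|u+B^TPx\|^2 .
\end{equation}
Rearranging gives $y^Ty+u^Tu=-\tfrac{d}{dt}(x^TPx)+\|u+B^TPx\|^2$, and integrating over $[0,T_f]$ while using the terminal condition $P(T_f)=0$ yields
\begin{equation}
J(x_0,u)=x_0^TP(0)x_0+\int_0^{T_f}\big\|u(t)+B^TP(t)x(t)\big\|^2\,dt .
\end{equation}
The integral is nonnegative and vanishes precisely when $u(t)=-B^TP(t)x(t)$, so this choice is the unique minimizer and the optimal cost equals $x_0^TP(0)x_0$. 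Substituting back $P(t)=X(T_f-t)$ and $P(0)=X(T_f)$ turns the optimal feedback into $\hat u(t)=-B^TX(T_f-t)\hat x(t)$, the closed-loop dynamics into $\dot{\hat x}(t)=(A-BB^TX(T_f-t))\hat x(t)$, and the optimal cost into $J(x_0,\hat u)=x_0^TX(T_f)x_0$, as claimed. This last part is routine algebra; the only delicate point is ensuring that $u=-B^TPx$ is itself admissible and generates a genuine trajectory, which is immediate from the linearity of the closed-loop system.
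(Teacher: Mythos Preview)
Your proof is correct and follows the standard completion-of-squares verification argument. However, note that the paper does not actually supply a proof of this theorem: it is stated as a summary of known results with the attribution ``(see \cite{corless})'', so there is no in-paper proof to compare against. What you have written is essentially the textbook argument one finds in \cite{corless} or \cite{abou03}: local existence via Picard--Lindel\"of, global existence via the a priori bound coming from the finite cost of a stabilizing feedback (which uses stabilizability of $(A,B)$), positivity from the cost interpretation, and then the optimality characterization by differentiating $x^TPx$ along trajectories and completing the square. Your derivation of the key identity and the subsequent integration are accurate, and the passage back from $P$ to $X$ via $P(t)=X(T_f-t)$ is handled cleanly. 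The only minor remark is that detectability of $(A,C)$ is not strictly needed to get positive \emph{semi}definiteness and the optimality formula on the finite horizon; its role is rather to guarantee stabilizing behaviour of the closed loop and strict positivity, so you might be slightly overstating what that hypothesis buys in the finite-horizon setting.
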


\begin{remark}
\noindent For the infinite horizon case, the optimal cost is given by 
$$ J(x_0,u_{\infty})=\left \{ \int_0^{\infty} \left ( y(t)^T\,y(t)+u(t)^T\,u(t) \right )\,dt \right\}= x_0X_{\infty} x_0,$$
where $X_{\infty}$ is the unique postitive and stabilizing solution of the algebraic Riccati equation
$$A^T X_{\infty} +  X_{\infty}A -X_{\infty}BB^TX_{\infty} +C^TC=0,$$
and the optimal feedback is given by $u_{\infty}=-B^T X_{\infty} \hat x(t)$.
\end{remark}
\begin{remark}
\noindent For the discrete case, LQR finite-horizon problem is described as follows
\begin{equation} \label{min}
J(x_0,u) =\inf_u \left \{  \displaystyle \sum_0^{N} \left  ( y_k^T\,y_k+u_k^TR\,u_k \right) \right \},
\end{equation}
under the discrete-dynamic constrains
\begin{equation}
\label{lti}
\left\{ \begin{array}{lcl}
 x_{k+1} &=& A\,x_k+B\,u_k. \\
 y_k &=& C\,x_k
 \end{array} 
 \right .
\end{equation}
The optimal control is given by
$$u_k=-F_ku_k,\;{\rm where}\; F_k= (R+B^TZ_{k+1}B)^{-1}B^TZ_{k+1}A,$$
and $P_{k+1}$ is computed by solving the following discrete-time algebraic Riccati equation
$$Z_{k}=A^TZ_{k+1}A-A^TZ_{k+1}B(R+B^TZ_{k+1}B)^{-1}B^TZ_{k+1}A+ C^TC.$$
When $N$ tends to infinity, we obtain the infinite-horizon discrete-time LQR and under some assumptions, $Z_{\infty}=\displaystyle \lim_{k \rightarrow \infty} Z_k$ is the unique positive definite solution to the discrete time algebraic Riccati equation (DARE)
$$Z_ {\infty}=A^{T}Z_{\infty}A-A^{T}Z_{\infty}B\left(R+B^{T}Z_{\infty}B\right)^{-1}B^{T}Z_{\infty}A+C^TC.$$
\end{remark}
In this paper, we consider only the continuous case which needs the development of efficient numerical methods that allow approximate solutions to the related large-scale differential Riccati matrix equation \eqref{ric1} .

\section{The extended block Arnoldi algorithm}
We first recall the extended block Arnoldi process applied to the
pair $ (A,C) $ where $ A \in \mathbb{R}^{n \times n} $ is nonsingular  and $ C \in
\mathbb{R}^{n \times s} $. The projection subspace ${\mathcal
K}^e_k(A,C)$ of $ \mathbb{R}^n $ which is considered in this paper was
introduced in \cite{druskin98,simoncini07}.
$$ {\mathcal
K}^{e}_k(A,C) = Range([C,A^{-1}\,C,A\,C,A^{-2}\,C,A^2\,C,\ldots,A^{-(k-1)}\,C,A^{k-1}\,C]). $$
Note that the subspace ${\mathcal
K}^{e}_k(A,C)$ is a sum of two block Krylov subspaces
$$ \mathcal{K}^e_k(A,C)=\mathcal{K}_{k}(A,C)\, + \, \mathcal{K}_k(A^{-1},A^{-1}C) $$
where $ \mathcal{K}_k(A,C)= Range([A, A\,C,\ldots,A^{k-1}\,C]) $.
The following algorithm allows us to compute an orthonormal basis of the extended Krylov subspace $ {\mathcal
K}^{e}_k(A,C) $. This basis contains information on both $ A $ and $ A^{-1} $. Let $ m $ be some fixed integer which limits the dimension of the constructed basis.  Therefore The extended block Arnoldi process is described as follows:\\

\begin{algorithm}[h!]
\caption{The extended block Arnoldi algorithm (EBA)}\label{eba}
\begin{itemize}
\item $ A $ an $ n \times n $ matrix, $ C $ an $ n \times s $ matrix and $ m $ an integer.
\item Compute the QR decomposition of $ [C,A^{-1}C] $,\textit{ i.e}., $ [C,A^{-1}C] = V_1\Lambda $; \\
\hspace*{1.2cm} Set $ {\mathcal V}_0 = \left [ ~\right] $;
\item For $ j = 1,\ldots,m $
\item \hspace*{0.4cm} Set $ V_j^{(1)} $: first $ s $ columns of $ V_j $ and  $ V_j^{(2)} $: second $ s $ columns of $ V_j $
\item  \hspace*{0.4cm} $ {\mathcal V}_j = \left [ {\mathcal V}_{j-1}, V_j \right ] $; $ \hat V_{j+1} = \left [ A\,V_j^{(1)},A^{-1}\,V_j^{(2)} \right ] $.
\item  Orthogonalize $ \hat V_{j+1} $ w.r.t $ {\mathcal V}_j $ to get $ V_{j+1} $, \textit{i.e.},\\
\hspace*{1.8cm} For $ i=1,2,\ldots,j $ \\
\hspace*{2.4cm} $ H_{i,j} = V_i^T\,\hat V_{j+1} $; \\
\hspace*{2.4cm} $ \hat V_{j+1} = \hat V_{j+1} - V_i\,H_{i,j} $; \\
\hspace*{1.8cm} Endfor $i$
\item Compute the QR decomposition of $ \hat V_{j+1} $, \textit{i.e.}, $ \hat V_{j+1} = V_{j+1}\,H_{j+1,j} $.\\
\item Endfor $j$.
\end{itemize} ${}$ 
\end{algorithm}
Since the above algorithm implicitly involves  a Gram-Schmidt process, the computed block vectors $ {\mathcal V}_m = \left [
V_1,V_2,\ldots,V_m \right ] $, $ V_ i \in \mathbb{R}^{n \times 2s} $ have their columns mutually orthogonal provided none of the upper triangular  matrices $ H_{j+1,j} $ are rank deficient.\\
Hence, after $ m $ steps, Algorithm 1 builds an orthonormal basis $ {\mathcal V}_m $ of the Krylov subspace 
$${\mathcal
K}^{e}_k(A,C) = {\rm Range}(C,A\,C,\ldots,A^{m-1}\,C,A^{-1}\,C,\ldots,(A^{-1})^m\,C) $$
and a block upper Hessenberg matrix $ H_m $ whose non zeros blocks are the $ H_{i,j} $. Note that each submatrix $ H_{i,j} $ ($1 \le i \le j \le m $) is of order $ 2s $. \\
Let $ {\mathcal T}_m \in \mathbb{R}^{2ms \times 2ms} $ be the
restriction of the matrix $ A $ to the extended Krylov subspace $
{\mathcal
K}^{e}_m(A,C) $, \textit{i.e.}, $ {\mathcal T}_m = {\mathcal V}_m^T\,A\,{\mathcal
V}_m $. It is shown in \cite{simoncini07} that $ {\mathcal T}_m $ is
also block upper Hessenberg with $ 2s \times 2s $ blocks.
Moreover, a recursion is derived to compute $ {\mathcal T}_m $ from $
H_m $ without requiring matrix-vector products with $ A $. For
more details about the computation of  $ {\mathcal T}_m $ from $ H_m $, we
refer to \cite{simoncini07}. We note that for large problems, the
inverse of the matrix $ A $ is not computed explicitly. Indeed, in many applications, the nonsingular matrix
$A$ is sparse and structured, allowing an effortless $LU$ decomposition in order to compute the block $\displaystyle{A^{-1}\,V_j^{(2)}}$.  It is also possible to use iterative solvers with preconditioners to solve
linear systems with $ A $. However, when these linear systems are
not solved accurately, the theoretical properties of the extended
block Arnoldi process are no longer valid. The next identities will be of use in the sequel\\
\vspace{0.1cm}
Let $ {\bar {\mathcal T}_m} = {\mathcal V}_{m+1}^T\,A\,{\mathcal V}_m $, and suppose that $ m $ steps of Algorithm 1 have been  run, then we have
\begin{eqnarray}
A\,{\mathcal V}_m & = & {\mathcal V}_{m+1}\,{\bar {\mathcal T}}_m, \\
              & = & {\mathcal V}_m\,{\mathcal T}_m + V_{m+1}\,T_{m+1,m}\,E_m^T.\label{ATVM}
\end{eqnarray}
where $ T_{i,j} $ is the $ 2s \times 2s $ $ (i,j) $ block of $ {\mathcal T}_m $ and $ E_m = [ O_{2s \times 2(m-1)s}, I_{2s} ]^T $ is the matrix of the last $ 2s $ columns of the $ 2ms \times 2ms $ identity matrix $ I_{2ms} $.

\section{The BDF  method for solving  DREs}

In this section, we recall some general facts about the well known BDF method which is a common choice for solving DREs. In the literature, to our knowledge, the integration methods (Rosenbrock, BDF) are directly applied to equation (\ref{ric1}), \cite{arias07,benner04,dieci92}. At each timestep $t_k$, the approximate $X_k$ of the $X(t_k)$, where $X$ is the solution to (\ref{ric1})  is then computed solving an algebraic Riccati equation (ARE) \cite{benner04}.  We consider the general DRE \eqref{ric1} and apply the $p$-step BDF method. At each iteration of the BDF method, the approximation $X_{k+1}$ of  $X(t_{k+1})$ is given by the implicit relation 
\begin{equation}
\label{bdf}
X_{k+1} = \displaystyle \sum_{i=0}^{p-1} \alpha_i X_{k-i} +h \beta {\mathcal F}(X_{k+1}),
\end{equation} 
where $h=t_{k+1}-t_k$ is the step size, $\alpha_i$ and $\beta_i$ are the coefficients of the BDF method as listed  in Table \ref{tab1} and ${\mathcal F}(X)$ is  given by 
$${\mathcal F}(X)= A^T\,X+X\,A-X\,B\,B^T\,X+C^T\,C.$$

\begin{table}[h!!]
\begin{center}
\begin{tabular}{c|cccc} 
\hline
$p$ & $\beta$ &$\alpha_0$ & $\alpha_1$ & $\alpha_2$ \\
\hline
1 & 1 & 1 & &\\
2 & 2/3 & 4/3& -1/3 &\\
3 & 6/11 & 18/11 & -9/11 & 2/11\\
\hline
\end{tabular}
\caption{Coefficients of the $p$-step BDF method with $p \le 3$.}\label{tab1}
\end{center}
\end{table}
\noindent The approximate $X_{k+1}$ solves the following matrix equation
\begin{equation*}
-X_{k+1} +h\beta (C^TC +A^T X_{k+1} + X_{k+1} A- X_{k+1} B B^T X_{k+1}) + \displaystyle \sum_{i=0}^{p-1} \alpha_i X_{k-i} = 0,
\end{equation*}
which can be written as the following  continuous-time algebraic Riccati equation

\begin{equation}
\label{ricbdf}
\mathcal{A}^T\, X_{k+1}  +\,X_{k+1}\, \mathcal{A}  -X_{k+1}\, \mathcal{B}\, \mathcal{B}^T \,X_{k+1} + \mathcal{C}_{k+1}^T \mathcal{C}_{k+1} =0,
\end{equation}
Where, assuming that at each timestep, $X_k$ can be approximated as a product of  low rank factors  $X_{k}\approx Z_{k} Z_k^T$, $Z_k \in \mathbb{R}^{n \times m_k}$, with $m_k \ll n$ (in practice, the $n \times n$ matrices $X_k$ are never computed, as we will explain in a remark after Algorithm 2), the coefficients matrices are given by
$$\mathcal{A}= h\beta A -\displaystyle \frac{1}{2}I,~~ \mathcal{B}= \sqrt{h \beta} B~\mbox{and }\mathcal{C}_{k+1}=[\sqrt{h\beta} C, \sqrt{\alpha_0}Z_k^T,\ldots,\sqrt{\alpha_{p-1}} Z_{k+1-p}^T]^T.$$
These Riccati equations can be solved applying direct methods based on Schur decomposition, or  based on generalized eigenvalues of the Hamiltonian in the small dimensional cases (\cite{arnold84,dooren81,mehrmann91}) or matrix sign function methods (\cite{byers87,kenny92,roberts98}). When the dimension of the problem is large, this approach would be too demanding in terms of computation time and memory. In this case, iterative methods, as Krylov subspaces, Newton-type (\cite{arnold84,benner98,chehab15,kleinman68,lancaster95,guo98}) or ADI-type appear to be a standard choice, see (\cite{bouhamidi16,heyouni09, jbilou03, jbilou06}) for more details. 

\subsection{The BDF+Newton+EBA  method}
As explained in the previous subsection, at each time step $t_k$, the approximation $X_{k+1}$ of $X(t_{k+1})$ is computed solving the large-scale ARE \eqref{ricbdf} which can be expressed as the nonlinear equation 

\begin{equation}
\label{newt1}
{\mathcal F}_k(X_{k+1})=0,
\end{equation}
where $\mathcal{F}_k$ is the matrix-valued function defined by
\begin{equation}
\label{newt2}
{\mathcal F}_k(X_{k+1})={\mathcal A}^TX_{k+1}+X_{k+1} {\mathcal A}-X_{k+1}{\mathcal{B}}{\mathcal{B}}^T X_{k+1}+{\mathcal{C}}_{k+1}^T{\mathcal{C}}_{k+1},
\end{equation}
where  

$X_{k}= Z_{k} Z_k^T$, ${\mathcal{A}}= h\beta A -\displaystyle \frac{1}{2}I$, $ {\mathcal{B}}= \sqrt{h \beta} B$, and $${\mathcal{C}}_{k+1}=[\sqrt{h\beta} C, \sqrt{\alpha_0}Z_k^T,\ldots,\sqrt{\alpha_{p-1}} Z_{k+1-p}^T]^T.$$
In the small dimensional case, direct methods, as Bartel Stewart algorithm is a usual choice for solving the symmetric Riccati equation \eqref{newt1}. For large-scale problems, a common strategy consists in applying the inexact Newton-Kleinman's method combined with an iterative method for the numerical resolution of the large-scale Lyapunov equations arising at each internal iteration of the Newton's algorithm, or a block Krylov projection method directly applied to \eqref{newt1}.\\
Omitting to mention the $k$ index in ${\mathcal F}_k$ in our notations, we define a sequence of approximates to $X_{k+1}$ as follows:

\begin{itemize}
\item Set $X_{k+1}^0=X_k$

\item Build the sequence $\left(X_{k+1}^{p}\right)_{p \in \mathbb{N}}$ defined by 

\begin{equation}
\label{newt3}
X_{k+1}^{p+1}=X_{k+1}^{p}-D{\mathcal F}_{X_{k+1}^{p}}({\mathcal F}(X_{k+1}^{p})
\end{equation}
\end{itemize}
where the Fr\'echet derivative $D{\mathcal F}$ of ${\mathcal F}$ at $X_{k+1}^p$ is given by

\begin{equation}
D{\mathcal F}_{X_{k+1}^{p}}(H)=(\mathcal{A}-\mathcal{B} \, {\mathcal{B}}^T \, X_{k+1}^{p})^T \, H\,+\,H \,(\mathcal{A}-\mathcal{B} \, {\mathcal{B}}^T \,X_{k+1}^{p})
\end{equation} 
A straightforward calculation proves that $X_{k+1}^{p+1}$ is the solution to the Lyapunov equation

\begin{equation}
\label{newt4}
(\mathcal{A}-\mathcal{B} \, {\mathcal{B}}^T \, X_{k+1}^{p})^T \, X\,+\,X \,(\mathcal{A}-\mathcal{B} \, {\mathcal{B}}^T \,X_{k+1}^{p})+X_{k+1}^{p}\, \mathcal{B}\mathcal{B}^T\,X_{k+1}^{p}+\mathcal{C}_{k+1}^T\mathcal{C}_{k+1}\,=0;
\end{equation}
Assuming that all the $X_{k-i}$'s involved in $\mathcal{C}_{k+1}$ can be approximated as  products of low-rank factors, \textit{ie} $X_{k-i}\approx Z_{k-i}Z_{k-i}^T$, we numerically solve \eqref{newt4} applying the Extended-Block-Arnoldi (EBA) method introduced in \cite{heyouni09,simoncini07}.  As mentioned in \cite{benner04}, in order to avoid using complex arithmetics, the constant term of  member of \eqref{newt4} can be splitted into two terms, separating the positive $\alpha_i$'s from the negative ones. This yields a pair of Lyapunov equations $[L_1]$ and $[L_2]$  that have to be numerically solved at each Newton's iteration. Then, the  solution $X_{k+1}^{p+1}$ to \eqref{newt4} is obtained by substracting the solutions of $[L_1]$ and $[L_2]$.
\\
In the numerical examples, we will refer to this method as BDF-Newton-EBA and we will compare its performances against the "reverse" method introduced in the next section.

\section{Projecting and solving the low dimensional DRE } 

In this section, we propose a new approach to obtain low rank approximate solution to the differential Riccati equation \eqref{ric1}. Instead of applying the integration scheme to the original problem (\ref{ric1}), we first reduce the dimension of the problem by projection.\\
Let us apply, under the assumption that the matrix $A$ is nonsingular, the Extended Block Arnoldi (EBA) algorithm to the pair $(A^T,C^T)$, generating the matrices ${\mathcal V}_m$ and ${\bar {\mathcal T}_m} $ as described in section 2. For the sake of simplicity, we omit to mention the time variable $t$ in the formulae. Let $X_m$ be the desired approximate solution to (\ref{ric1}) given as 
\begin{equation}\label{approx1}
X_m = {\mathcal V}_m Y_m {\mathcal V}_m^T,
\end{equation}
satisfying the Galerkin orthogonality condition
\begin{equation}
\label{galerkin}
{\mathcal V}_m^T R_m {\mathcal V}_m =0,
\end{equation}
where $R_m$ is the residual $ R_m = \displaystyle {\dot X_m}-A^T\,X_m-X_m\,A+X_m\,B\,B^T\,X_m- C^T\,C $ associated to the approximation $X_m$.  Then, from \eqref{approx1} and \eqref{galerkin}, we obtain the low dimensional differential Riccati equation
\begin{equation}\label{lowric}
\displaystyle {\dot Y}_m- {\mathcal T}_m\,Y_m-Y_m\,{\mathcal T}_m^T + Y_m\,{B}_m\,{ B}_m^T\,Y_m\, - {C}_m^T  {C}_m=0
\end{equation}
with $ { B}_m = {\mathcal V}_m^T\,B $, $ { C}_m^T = {\mathcal V}_m^T\,C^T = {\mathcal E}_1\,\Lambda_{1,1} $,  where $ {\mathcal E}_1 = [ I_{s} , O_{s \times (2m-1) s}]^T $ is the matrix of the first $ s $ columns of the $ 2ms \times 2ms $ identity matrix $ I_{2ms} $ and $\Lambda_{1,1}$ is the $ s \times s$ matrix obtained from the QR decomposition
\begin{equation}
\label{qrdecomp}
[C^T, A^{-T}C^T]= V_1\, \Lambda \;\; {\rm with}\;\; \Lambda =  \left(
\begin{array}{cc}
\Lambda_{1,1}& \Lambda_{1,2}\\
0&\Lambda_{2,2}
\end{array}
\right).
\end{equation}
At each timestep and for a given Extended Block Krylov projection subspace, applying a BDF($p$) integration scheme, we have to solve  a small dimensional continuous algebraic Riccati equation derived from (\ref{lowric}) as explained in Section 3. This can be done by performing a direct method and we assume that it has a unique symmetric positive semidefinite and stabilizing solution $ Y_m $. Nevertheless, the computations of $ X_m $ and $ R_m $ become increasingly expensive as $ m $ gets larger. In order to stop the EBA iterations, it is desirable to be able to test if $ \parallel R_m
\parallel < \epsilon $, where $\epsilon$ is some chosen tolerance, without having to compute extra matrix  products
involving the matrix $ A $. The next result gives an expression of the residual norm of $ R_m $ which does not require the explicit calculation of the approximate $
X_m $. A factored form will be computed only when the desired accuracy is achieved. This approach will be denoted as Extended Block Arnoldi-BDF($p$) method in the sequel.

\begin{theorem} \label{t2}
Let $ X_m = {\mathcal V}_mY_m{\mathcal V}_m^T $ be the approximation obtained at step $ m $ by the Extended Block Arnoldi-BDF($p$) method and $ Y_m $ solves  the low-dimensional differential Riccati equation (\ref{lowric}),  Then the residual $ R_m $ satisfies
%
%
\begin{equation}
\label{result2}
\parallel R_m \parallel = \parallel T_{m+1,m} \hat Y_m \parallel,
\end{equation}
where $ \hat Y_m $ is the $ 2s \times 2ms $  matrix corresponding to the last $ 2s $ rows of $ Y_m $.
\end{theorem}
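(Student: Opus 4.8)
The plan is to substitute the approximation $X_m = \mathcal{V}_m Y_m \mathcal{V}_m^T$ directly into the definition of the residual and then use the extended block Arnoldi relation \eqref{ATVM} to isolate the terms that survive when $m$ steps do not fully capture the action of $A^T$. First I would write out
$$
R_m = \dot{X}_m - A^T X_m - X_m A + X_m B B^T X_m - C^T C,
$$
and replace each occurrence of $X_m$ by $\mathcal{V}_m Y_m \mathcal{V}_m^T$. Since $\mathcal{V}_m$ is constant in $t$, the time-derivative term becomes $\mathcal{V}_m \dot{Y}_m \mathcal{V}_m^T$. For the terms $A^T X_m$ and $X_m A$, the key substitution is the Arnoldi relation applied to the pair $(A^T, C^T)$: from \eqref{ATVM} we have $A^T \mathcal{V}_m = \mathcal{V}_m \mathcal{T}_m + V_{m+1} T_{m+1,m} E_m^T$, so that $A^T X_m = (\mathcal{V}_m \mathcal{T}_m + V_{m+1} T_{m+1,m} E_m^T) Y_m \mathcal{V}_m^T$, and symmetrically for $X_m A$.

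Next I would collect the projected part and the leftover part separately. The projected part, namely
$$
\mathcal{V}_m\bigl(\dot{Y}_m - \mathcal{T}_m Y_m - Y_m \mathcal{T}_m^T + Y_m B_m B_m^T Y_m - C_m^T C_m\bigr)\mathcal{V}_m^T,
$$
vanishes identically because $Y_m$ solves the low-dimensional Riccati equation \eqref{lowric}; here one uses $B_m = \mathcal{V}_m^T B$ and $C_m^T = \mathcal{V}_m^T C^T$ to check that the nonlinear and constant terms project correctly onto $\mathcal{V}_m(\cdot)\mathcal{V}_m^T$. What remains is exactly the two rank-deficient correction terms coming from $V_{m+1} T_{m+1,m} E_m^T$:
$$
R_m = -\,V_{m+1} T_{m+1,m} E_m^T Y_m \mathcal{V}_m^T - \mathcal{V}_m Y_m E_m (T_{m+1,m})^T V_{m+1}^T.
$$
The quantity $E_m^T Y_m$ selects precisely the last $2s$ rows of $Y_m$, which is the matrix denoted $\hat{Y}_m$ in the statement.

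To finish, I would compute $\|R_m\|$ in the $2$-norm. Writing $S = T_{m+1,m}\hat{Y}_m \mathcal{V}_m^T$, the residual has the form $R_m = -(V_{m+1} S + S^T V_{m+1}^T)$, a symmetric matrix. The decisive structural fact is that $\mathcal{V}_{m+1} = [\mathcal{V}_m, V_{m+1}]$ has orthonormal columns, so $V_{m+1}^T \mathcal{V}_m = 0$ and $V_{m+1}^T V_{m+1} = I$. This orthogonality makes the two cross terms in $R_m R_m^T$ (or equivalently the off-diagonal blocks when $R_m$ is expressed in the orthonormal frame $\mathcal{V}_{m+1}$) decouple cleanly, so that the nonzero singular values of $R_m$ coincide with those of $T_{m+1,m}\hat{Y}_m$, yielding $\|R_m\| = \|T_{m+1,m}\hat{Y}_m\|$.

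The main obstacle is the last step: getting the symmetric combination $V_{m+1} S + S^T V_{m+1}^T$ to collapse to the norm of the single factor $T_{m+1,m}\hat{Y}_m$ rather than a larger expression. The safe way is to project into the orthonormal basis $\mathcal{V}_{m+1}$, where $R_m$ becomes a block $2\times 2$ matrix with zero $(1,1)$ block and off-diagonal blocks equal to $\pm T_{m+1,m}\hat{Y}_m$ (and zero $(2,2)$ block since $\hat{Y}_m$ lives in the $\mathcal{V}_m$ coordinates); the $2$-norm of such a symmetric off-diagonal block matrix equals the $2$-norm of the single off-diagonal block, using that $\|\mathcal{V}_{m+1}^T M \mathcal{V}_{m+1}\| = \|M\|$ for orthonormal $\mathcal{V}_{m+1}$ acting on the relevant range. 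I would be careful to verify the orthogonality bookkeeping and that no $\mathcal{V}_m^T C^T$ term leaks into the residual, since the constant term $C^T C$ must be absorbed exactly by $C_m^T C_m$ under projection.
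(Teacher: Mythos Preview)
Your proposal is correct and follows essentially the same route as the paper: substitute $X_m=\mathcal{V}_mY_m\mathcal{V}_m^T$ into the residual, apply the Arnoldi relation $A^T\mathcal{V}_m=\mathcal{V}_m\mathcal{T}_m+V_{m+1}T_{m+1,m}E_m^T$, use that $C^T$ lies in the range of $V_1$ so that $C^TC=\mathcal{V}_mC_m^TC_m\mathcal{V}_m^T$ exactly, cancel the $(1,1)$ block via \eqref{lowric}, and read off the norm from the resulting block anti-diagonal matrix in the orthonormal frame $\mathcal{V}_{m+1}$. If anything, you are more explicit than the paper about why the $2$-norm of the symmetric off-diagonal block matrix equals $\|T_{m+1,m}\hat Y_m\|$; the paper simply asserts this after displaying $R_m=\mathcal{V}_{m+1}\,\mathcal{J}_m\,\mathcal{V}_{m+1}^T$.
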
 
\begin{proof}
From the relations (\ref{approx1}) and (\ref{lowric}), we have
$$ R_m =\displaystyle {\dot X}_m(t)- A^T\,{\mathcal V}_m\,Y_m\,{\mathcal V}_m^T-{\mathcal V}_m\,Y_m\,{\mathcal V}_m^T\,A + {\mathcal V}_m\,Y_m\,{\mathcal V}_m^T\,B\, B^T\,{\mathcal V}_m\,Y_m\,{\mathcal V}_m^T - C^T\,C. $$
Using (\ref{approx1}) and the fact that $ C^T = V_1^{(1)}\,\Lambda_{1,1} $ where $V_1^{(1)}$ is the the matrix of the first $s$ columns of $V_1$ and $\Lambda_{1,1}$ is defined in (\ref{qrdecomp}), we get
\begin{eqnarray*}
R_m & = &  {\mathcal V}_m\displaystyle {\dot Y}_m {\mathcal V}_m^T- ({\mathcal V}_m\,{\mathcal T}_m + V_{m+1}\,T_{m+1,m}\,E_m^T)\,Y_m\,{\mathcal V}_m^T -
          {\mathcal V}_m\,Y_m\,({\mathcal T}_m^T\,{\mathcal V}_m^T + E_m\,T_{m+1,m}^T\,V_{m+1}^T) \\
    &   & +{\mathcal V}_m\,Y_m\,{B}_m\,{B}_m^T\,Y_m\,{\mathcal V}_m^T - V_1^{(1)}\,\Lambda_{1,1}\,\Lambda_{1,1}^T\,{V_1^{(1)}}^T \\
       & = & {\mathcal V}_{m+1}\,{\mathcal J}_m\,{\mathcal V}_{m+1}^T.
\end{eqnarray*}
where $${\mathcal J}_m=\left[ \begin{array}{cc} \displaystyle {\dot Y}_m- {\mathcal T}_m\,Y_m-Y_m\,{\mathcal T}_m^T + Y_m\,{ B}_m\,{B}_m^T\,Y_m\, - {\mathcal E}_1\,\Lambda_{1,1}\,\Lambda_{1,1}^T\,{\mathcal E}_1^T &
             Y_m\,E_m\,T_{m+1,m}^T \\ T_{m+1,m}\,E_m^T\,Y_m & 0 \end{array} \right ].$$
Since $ C_m =  {\mathcal E}_1\,\Lambda_{1,1} $ and $ Y_m $ is  solution of the reduced DRE (\ref{lowric}), then
\begin{eqnarray*}
R_m & = & {\mathcal V}_{m+1} \, \left[ \begin{array}{cc}
          0 & Y_m\,E_m\,T_{m+1,m}^T \\ T_{m+1,m}\,E_m^T\,Y_m & 0 \end{array}\right] \,{\mathcal V}_{m+1}^T 
\end{eqnarray*}
%
%
and
\begin{eqnarray*}
\| R_m\| = \| T_{m+1,m}\,E_m^T\,Y_m\| = \| T_{m+1,m}\,{\hat Y}_m\|,
\end{eqnarray*}
where $ {\hat Y}_m = E_m^T\,Y_m $ represents the $ 2s $ last rows of $ Y_m $.
\end{proof} 

\noindent The result of Theorem \ref{t2} is important in practice, it allows us to stop the iteration when convergence is achieved without computing
the approximate solution $X_m$ at each iteration. In our experiments, this test on the residual is performed at the final timestep. 
We summarize the steps of our method in the following algorithm

\begin{algorithm}[h!]
\caption{The EBA-BDF(p) method for DRE's}\label{algo_EBA_BDF}
\begin{itemize}
\item Input $X_0=X(0)$, a tolerance $tol>0$, an integer $m_{max}$ and timestep  $h$. Set $nbstep=T_f/h$.
\item Compute  $X_1,\, ...,\,X_{p-1}$ as low-rank products $X_j\approx Z_jZ_j^T$ (*)\\
\item  For $ m = 1,\ldots,m_{max} $
\item  Compute an orthonormal basis ${\mathcal V}_m=[V_1,...,V_m]$ of ${\mathcal K}_m^e(A,C^T)=Range[C^T,A^{-1}C,...,A^{-(m-1)}C^T,A^{m-1}C^T]$ 
\item Set ${B}_m={\mathcal V}_m^TB$, ${C}_m^T={\mathcal V}_m^TC^T$ and ${\mathcal T}_m={\mathcal V}_m^TA{\mathcal V}_m$,
\item Apply BDF($p$) to the projected DRE $\dot{Y_m}(t)={\mathcal T}_mY_m(t)+Y_m(t){\mathcal  T}_m^T+Y_m(t) {B}_m\,{B}_m^TY_m(t)+{C}_m^T{C}_m$\\
\item $\rightarrow $ once $Y_m(T_f)$ is computed, if $\|R_m(T_f)\|=\|T_{m+1,m}E_m^T \hat{Y}_m(T_f)\|<tol$, then $m_{max}=m$ 
\item Endfor $i$
\item $X_m(T_f)\approx Z_m(T_f)Z_m^T(T_f)$ (whithout computing the product $X_m(T_f)=V_mY_m(T_f)V_m^T)$ (**)
\end{itemize}  
\end{algorithm}
\noindent Let us give some important remarks on the preceding algorithm:
\begin{itemize}
	\item  In order to initialize the BDF($p$) integration scheme, the $p-1$ approximates $X_1$,...,$X_{p-1}$ are computed by lower-order integration schemes. In our tests, we chose $p=2$ and $X_1$ was computed as a product of low-rank factors ($X_1\approx Z_1Z_1^T$) by the Implicit Euler method BDF($1$).
	\item During the integration process, as explained in Section 3, the constant term ${\mathcal C}_k$ is updated at each timestep $t_k$, taking into account the low-rank factor $\sqrt{(\alpha_{0})}Z_k$ of the approximate factorization $X_k\approx Z_kZ_k^T$. This factorization does not require the computation of $X_k=V_kY_kV_k^T$ as it is obtained by performing a truncated single value decomposition of the small dimensional matrix.  Consider the singular value decomposition of the
 matrix $Y_k=U\, \Sigma\, U^T$ where $\Sigma$ is the
	diagonal matrix of the singular values of $Y_k$ sorted in
	decreasing order. Let $U_l$ be the $2k \times l$ matrix  of   the first $l$ columns of $U$
	corresponding to the $l$ singular values of magnitude greater than
	 some tolerance $dtol$. We obtain the
	truncated singular value decomposition  $Y_k \approx U_l\, \Sigma_l\,
	U_l^T$ where $\Sigma_l = {\rm diag}[\sigma_1, \ldots, \sigma_l]$.
	 Setting $Z_k={\cal V}_k \, U_l\, \Sigma_l^{1/2}$, it
	follows that
	\begin{equation*}
	\label{appXm}
	X_k(t)\approx Z_k(t)\, Z_k(t)^T.
	\end{equation*} 
\end{itemize}

\noindent The following result shows  that the approximation $X_m $ is an exact solution of a perturbed  differential  Riccati equation. \\

\begin{theorem}
Let $X_m$ be the approximate solution given by \eqref{approx1}. Then we have 
\begin{equation}
\label{pertu}
\displaystyle {\dot X}_m=(A-F_m)^T\,X_m+X_m\,(A-F_m)-X_m\,B\,B^T\,X_m+C^T\,C.
\end{equation}
where $ F_m = V_m\,T_{m+1,m}^T\,V_{m+1}^T $.
\end{theorem}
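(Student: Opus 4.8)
The plan is to read \eqref{pertu} as the statement that $X_m$ solves the genuine DRE up to the symmetric correction produced by replacing $A$ with $A-F_m$, and then to match that correction with the residual $R_m$. By the definition of the residual in Section~5,
\begin{equation*}
\dot X_m = A^T X_m + X_m A - X_m B B^T X_m + C^T C + R_m.
\end{equation*}
Replacing $A$ by $A-F_m$ in the two drift terms only adds $-(F_m^T X_m + X_m F_m)$ to the right-hand side of the true DRE, so the whole theorem reduces to the single identity $R_m = -(F_m^T X_m + X_m F_m)$: the residual must be exactly the symmetric perturbation of the drift carried by $F_m$.

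First I would recall the factored residual obtained inside the proof of Theorem~\ref{t2}. Writing ${\mathcal V}_{m+1}=[{\mathcal V}_m,\,V_{m+1}]$ and multiplying out the $2\times 2$ block matrix found there gives
\begin{equation*}
R_m = {\mathcal V}_m Y_m E_m T_{m+1,m}^T V_{m+1}^T + V_{m+1} T_{m+1,m} E_m^T Y_m {\mathcal V}_m^T .
\end{equation*}
Then I would use two structural facts: that $E_m$ selects the last $2s$ columns, so ${\mathcal V}_m E_m = V_m$, and that the basis is orthonormal, ${\mathcal V}_m^T {\mathcal V}_m = I_{2ms}$. Substituting $X_m = {\mathcal V}_m Y_m {\mathcal V}_m^T$ and $F_m = V_m T_{m+1,m}^T V_{m+1}^T$, the product $X_m F_m$ collapses (through ${\mathcal V}_m^T {\mathcal V}_m = I$) to ${\mathcal V}_m Y_m E_m T_{m+1,m}^T V_{m+1}^T$, while $F_m^T X_m$ collapses to $V_{m+1} T_{m+1,m} E_m^T Y_m {\mathcal V}_m^T$. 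Hence $F_m^T X_m + X_m F_m$ reproduces $R_m$ term by term, and feeding this back into the rearranged DRE above yields the perturbed equation \eqref{pertu}.

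The orthogonality bookkeeping is routine; the point that needs care — and the one I expect to be the main obstacle — is the sign. Carried out literally, the computation produces $F_m^T X_m + X_m F_m = +R_m$ for $F_m = V_m T_{m+1,m}^T V_{m+1}^T$, whereas the matching above requires $R_m = -(F_m^T X_m + X_m F_m)$; consistency therefore forces the sign convention in the definition of $F_m$ (equivalently, the $A-F_m$ versus $A+F_m$ choice) to be fixed at the outset and kept aligned with the sign used for $R_m$. A secondary check is that no cross term is silently lost: both $X_m F_m$ and $F_m^T X_m$ survive precisely because $F_m$ has its columns in ${\rm Range}({\mathcal V}_m)$ and its rows in ${\rm Range}(V_{m+1})$, so the only orthogonality ever invoked is ${\mathcal V}_m^T {\mathcal V}_m = I$, while the potential term pairing ${\mathcal V}_m^T V_{m+1}=0$ never arises.
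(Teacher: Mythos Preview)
Your route is sound and close to the paper's, but organized differently: the paper does not pass through the residual at all. It starts from the reduced DRE \eqref{lowric}, multiplies on the left by $\mathcal V_m$ and on the right by $\mathcal V_m^T$, and replaces $\mathcal V_m\mathcal T_m$ by $A^T\mathcal V_m - V_{m+1}T_{m+1,m}E_m^T$ via the Arnoldi relation. The identity $\mathcal V_m E_m = V_m$ then turns the two extra terms directly into $-F_m^T X_m$ and $-X_m F_m$. Your version reaches the same place by first quoting the block form of $R_m$ from Theorem~\ref{t2} and then recognizing those blocks as $X_mF_m$ and $F_m^TX_m$; either way the algebra is the same two uses of $\mathcal V_m^T\mathcal V_m=I$ and $\mathcal V_m E_m=V_m$.

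On the sign issue you flag: it is not a matter of convention but a typo in the displayed $\mathcal J_m$ in the proof of Theorem~\ref{t2}. If you redo that expansion from the definition $R_m=\dot X_m - A^TX_m - X_mA + X_mBB^TX_m - C^TC$, the cross terms come with minus signs, so the off-diagonal blocks of $\mathcal J_m$ are $-Y_mE_mT_{m+1,m}^T$ and $-T_{m+1,m}E_m^TY_m$. This is immaterial for the norm statement of Theorem~\ref{t2}, but it is exactly what gives $R_m = -(F_m^TX_m + X_mF_m)$ here, with $F_m$ defined as stated. So there is no freedom to flip between $A-F_m$ and $A+F_m$: once the signs in $\mathcal J_m$ are corrected, your argument closes with $A-F_m$ as in \eqref{pertu}.
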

%
\begin{proof}
Let  $ X_m = {\mathcal V}_m\,Y_m\,{\mathcal V}_m^T $ and by  multiplying the reduced-order DRE (\ref{lowric})
on the left by $ {\mathcal V}_m $ and on the right by $ {\mathcal V}_m^T $ and using (\ref{ATVM}), we get 
\begin{eqnarray*} \label{f1}
\displaystyle {\dot X}_m=\left [ A^T{\mathcal V}_m - V_{m+1}T_{m+1,m}E_m^T \right ]Y_m{\mathcal V}_m^T & + & {\mathcal V}_mY_m\left [ A^T{\mathcal V}_m - V_{m+1}T_{m+1,m}E_m^T \right ]^T  \\ \nonumber
- {\mathcal V}_mY_m{\mathcal V}_m^TBB^T{\mathcal V}_mY_m{\mathcal V}_m^T & + & C^TC.
\end{eqnarray*}
The relation (\ref{pertu}) is obtained by letting
 $ F_m = V_m\,T_{m+1,m}^T\,V_{m+1}^T $ and by noticing that $ {\mathcal V}_m\,E_m = V_m $.
\end{proof}

\medskip
\begin{theorem}
Let $X$ be the exact solution of \eqref{ric1} and let $X_m$ be the approximate solution obtained at step $m$. The error $E_m=X-X_m$  satisfies the following equation
\begin{equation}\label{pertu2}
\displaystyle {\dot E}_m=(A^T-XBB^T)E_m+E_m(A-BB^TX)+E_mBB^TE_m +R_m,
\end{equation}
where $R_m$ is the residual given by $R_m=  \displaystyle \dot{X}_m-A^T\,X_m-X_m\,A+X_m\,B\,B^T\,X_m- C^T\,C .$
\end{theorem}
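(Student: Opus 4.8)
The plan is to derive the error dynamics purely algebraically, by subtracting the defining equation of $X_m$ from that of $X$ and then rewriting the difference of the two quadratic terms. First I would record the two governing identities. The exact solution satisfies \eqref{ric1}, i.e. $\dot X = A^T X + X A - X B B^T X + C^T C$, while the definition of the residual given in the statement rearranges to $\dot X_m = A^T X_m + X_m A - X_m B B^T X_m + C^T C + R_m$. Subtracting these and using $E_m = X - X_m$, the constant term $C^T C$ cancels and, by linearity, the terms $A^T X - A^T X_m$ and $X A - X_m A$ collapse to $A^T E_m + E_m A$. This leaves
\begin{equation*}
\dot E_m = A^T E_m + E_m A - \bigl( X B B^T X - X_m B B^T X_m \bigr) - R_m .
\end{equation*}
Note that no Arnoldi or projection structure is needed at this stage; the identity \eqref{ATVM} and the Galerkin condition \eqref{galerkin} play no role, since the statement concerns any $X_m$ through its residual.

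The key step is to linearize the nonlinear term about $X$. I would substitute $X_m = X - E_m$ into $X_m B B^T X_m$ and expand, which gives
\begin{equation*}
X B B^T X - X_m B B^T X_m = X B B^T E_m + E_m B B^T X - E_m B B^T E_m .
\end{equation*}
Inserting this back and grouping the terms that multiply $E_m$ on the left and on the right yields exactly $(A^T - X B B^T) E_m + E_m (A - B B^T X)$, while the leftover term $+\, E_m B B^T E_m$ survives and produces the Riccati-type quadratic contribution appearing in \eqref{pertu2}.

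The only genuine difficulty here is bookkeeping: tracking which factor of $X$ versus $E_m$ lands on each side of $B B^T$ when expanding the product, and fixing the overall sign with which $R_m$ enters the final equation. With the residual convention written in the statement, the clean subtraction above produces $-R_m$; obtaining \eqref{pertu2} with $+R_m$ amounts to adopting the opposite sign for $R_m$ in its definition, so I would make the sign convention explicit at the outset to keep the bookkeeping consistent. Apart from this, the derivation is a routine cancellation and requires no further machinery.
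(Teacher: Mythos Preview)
Your approach is exactly the paper's: the authors' proof consists of the single remark that the result follows by subtracting the residual equation from the initial DRE \eqref{ric1}, which is precisely the computation you carry out in detail. Your observation about the sign is also correct---with the residual convention written in the statement, the subtraction yields $-R_m$ rather than $+R_m$, so the sign in \eqref{pertu2} is a typo in the paper.
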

\begin{proof}
The result is easily obtained by subtracting the residual equation from the initial DRE \eqref{ric1}.
\end{proof}

\section{The projected LQR problem}
In this section, we consider the finite horizon LQR problem \eqref{min} and show how the extended block Arnoldi method can be used to give approximate costs to \eqref{min}. 
At step $m$ of the extended block Arnoldi algorithm,  let us consider  the projected low order dynamical system obtained by projecting the original dynamical system \eqref{min} onto the extended block Krylov subspace ${\mathcal K}_m(A^T,C^T) $:
\begin{equation}
\label{ltip}
\left\{ \begin{array}{lcl}
 \dot {\tilde x}_m(t) &=& {\cal T}_m\,{\tilde x}_m(t)+B_m\,{\tilde u}_m(t), \;\;\; {\tilde x}_m(0)=x_{m,0}. \\
{\tilde  y}_m(t) &=& C_m\, {\tilde x}_m(t)
 \end{array} 
 \right .
\end{equation}
where $B_m={\mathcal V}_m^TB$, $C_m^T= {\mathcal V}_m^TC^T={\mathcal E}_1\,\Lambda_{1,1} $ and $x_{m,0}= {\mathcal V}_m^T x_0$. Notice that for small values of the iteration number $m$, $x_m(t)= {\mathcal V}_m {\tilde x}_m(t)$ is an approximation of the original large state $x(t)$ . 
\begin{proposition}
\label{theoJm}
Assume at step $m$ that $({\cal T}_m,B_m)$ is stabilizable and that $({C}_m,{\cal T}_m)$ is detectable and consider  the low dimension  LQR problem  with finite time-horizon:  \\
\noindent Minimize
\begin{equation} \label{minm}
J_m(x_{m,0},{\tilde u}_m) =  \int_0^{T_f} \left ( {\tilde y}_m(t)^T\,{\tilde y}_m(t)+{\tilde u}_m(t)^T\,{\tilde u}_m(t) \right )\,dt,
\end{equation}
under the dynamic constrains \eqref{ltip}.
Then, the unique optimal feedback ${\tilde u}_{*,m}$ minimizing the cost function \eqref{minm} is given by
$${\tilde u}_{*,m}(t)= -B_m {\widetilde Y}_m(t) {\hat x}_m(t),$$
where  ${\widetilde Y}_m(t)=Y_m(T_f-t)$ and $Y_m$  is the unique stabilizing solution of \eqref{lowric}.
\end{proposition}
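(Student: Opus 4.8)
The plan is to read the statement as an immediate instance of the general finite-horizon LQR result, Theorem \ref{optim}, applied not to the original triple $(A,B,C)$ but to the reduced triple $(\mathcal{T}_m,B_m,C_m)$ produced by the extended block Arnoldi projection. Indeed, \eqref{ltip} together with the cost \eqref{minm} forms a genuine LQR problem of the same shape as \eqref{lti1}--\eqref{min}, only now posed in $\mathbb{R}^{2ms}$. First I would check that the hypotheses of Theorem \ref{optim} are met for this reduced data: stabilizability of the reduced pair and detectability of the reduced output pair are exactly what the proposition assumes, so no extra work is needed there.

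Next I would apply Theorem \ref{optim} verbatim to the reduced system. It guarantees a unique positive semidefinite stabilizing solution of the reduced differential Riccati equation and yields the optimal feedback in the closed form $\tilde u_{*,m}(t)=-B_m^{T}\,\Pi_m(T_f-t)\,\hat x_m(t)$, where $\Pi_m$ is that Riccati solution. The crucial identification is then that $\Pi_m$ coincides with the matrix $Y_m$ of \eqref{lowric}: equation \eqref{lowric} is precisely the Galerkin projection of \eqref{ric1} onto $\mathcal{K}_m^{e}(A^{T},C^{T})$, hence it is exactly the reduced Riccati equation dictated by Theorem \ref{optim}, carrying the same forward time normalization as \eqref{ric1}. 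Setting $\tilde Y_m(t)=Y_m(T_f-t)$ as in the statement, the feedback reads $\tilde u_{*,m}(t)=-B_m^{T}\,\tilde Y_m(t)\,\hat x_m(t)$, which is the announced expression.

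The step I expect to be the main obstacle is the transpose bookkeeping needed to see that \eqref{lowric} really is the Riccati equation associated with the reduced LQR system. The linear part of \eqref{lowric} carries $\mathcal{T}_m$ and $\mathcal{T}_m^{T}$, whereas a naive substitution $(A,B,C)\mapsto(\mathcal{T}_m,B_m,C_m)$ into \eqref{ric1} would place them in the opposite positions; reconciling the two requires using that the reduced dynamics matrix is $\mathcal{V}_m^{T}A\,\mathcal{V}_m=\mathcal{T}_m^{T}$ and that $Y_m$ is symmetric, being the coordinate representation of the symmetric approximate $X_m=\mathcal{V}_m Y_m\mathcal{V}_m^{T}$. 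Once this matching is settled, together with the obvious symmetry of the data $B_mB_m^{T}$ and $C_m^{T}C_m$, the identification $\Pi_m=Y_m$ is exact, and Theorem \ref{optim} supplies both the uniqueness and the stabilizing character of $Y_m$ invoked in the statement, with no further computation required.
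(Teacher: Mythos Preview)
Your proposal is correct and follows essentially the same approach as the paper, which merely remarks that the result ``can be easily derived from the fact that $Y_m$ is obtained from the projected Riccati equation \eqref{lowric} and also from the fact that the dynamical system \eqref{ltip} is obtained from a Galerkin projection of \eqref{lti} onto the same extended block Krylov subspace.'' You have simply fleshed out this sentence, and in particular you correctly diagnose the transpose issue: since the Arnoldi process is run on $(A^{T},C^{T})$, the reduced dynamics matrix is $\mathcal{V}_m^{T}A\,\mathcal{V}_m=\mathcal{T}_m^{T}$, which is exactly what makes \eqref{lowric} the Riccati equation attached to the reduced LQR problem via Theorem~\ref{optim}.
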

The proof can be easily derived from the fact that $Y_m$ is obtained from the projected Riccati equation \eqref{lowric} and also from the fact that the dynamical system \eqref{ltip} is obtained from a Galerkin projection of \eqref{lti} onto the same extended block Krylov subspace.\\

\noindent The optimal projected state  satisfies $ \dot {\tilde  x}_m(t) = (A-BB^T {\widetilde Y}_m(t) ){\tilde x}_m(t)$ which can also be expressed as
$$  {\tilde  x}_m(t)= e^{tA}  {\tilde x}_{m,}0 + \displaystyle \int_0^t e^{(t-\tau) {\cal T}_m}B_m {\tilde u}_{*,m}(\tau)d \tau,$$
 and the  optimal cost is given the following quadratic function of the initial state $x_0$
\begin{eqnarray}\label{optimm}
J_m(x_{m,0},{\hat u}_m)  & =  & x_{m,0}^T {\widetilde Y}_m(0) x_{m,0}= x_0{\cal V}_m {\widetilde Y}_m(0){\cal V}_m^Tx_0\\
& = & x_0 P_m(0) x_0=x_0^T X_m(T_f)x_0, 
\end{eqnarray}
since  $X_m(t)=P_m(T_f-t)$ where $X_m={\cal V}_m Y_m {\cal V}_m^T$ is the obtained approximate solution to \eqref{ric1}. This shows clearly that the reduced optimal cost is an approximation of the initial minimal  cost.\\
As already stated, the vector $x_m(t)= {\cal V}_m {\tilde x}_m(t)$ is an approximation of the original state vector $x(t)$. The corresponding feedback law is determined by $u_m(t)=-B^T P_m(t) x_m(t)$ where $P_m(t)$ is the  approximate solution to the differential matrix Riccati equation \eqref{ric2}. Then
\begin{eqnarray*}
J(x_0,u_m) & =  & \inf \left \{  \int_0^{T_f} \left ( x_m(t)^TC^TC\,x_m(t)+u_m(t)^T\,u_m(t) \right )\,dt \right \}\\
& = & \inf \left \{ \int_0^{T_f} \left ( {\tilde x}_m(t)^T {\cal V}_m^T C^TC{\cal V}_m\,x_m(t)+{\tilde x}_m(t)^T {\cal V}_m^T P_m(t)^TBB^TP_m(t)  {\cal V}_m {\tilde x}_m(t)\right )\,dt \right \}.\\
\end{eqnarray*}
Now, using the fact that $P_m(t)={\cal V}_m {\tilde Y}_m(t)
 {\cal V}_m^T$, it follows that
\begin{eqnarray*}
J(x_0,u_m) & = &  \inf \left \{  \int_0^{T_f} \left ( {\tilde y}_m(t)^T\,{\tilde y}_m(t)+{\tilde u}_m(t)^T\,{\tilde u}_m(t) \right )\,dt \right \}\\
& = & J_m(x_{m,0},{\tilde u}_{*,m}),
\end{eqnarray*}
where ${\tilde u}_{*,m}(t)= -B_m {\widetilde Y}_m(t) {\hat x}_m(t)$.
\begin{remark}
It was shown in \cite{corless} that, assuming the pair $(A,B)$ to be stabilizable, ${X(t)}$ is an increasing and symmetric positive set of matrices satisfying the differential Riccati equation \eqref{ric1} and  that there exists a finite positive scalar $M$ such that for any initial state $x_0$, we have 
$$x_0^T X(t)x_0 \le M \parallel x_0 \parallel^2, \; \; \forall t \ge 0,$$ for every initial vector $x_0$ 
which shows that $\parallel X(t) \parallel \le M$  for all $t$. Therefore, ${X(t)}$ converges to a symmetric and  positive matrix   $X_{\infty}$ as $t \rightarrow \infty$: $X_{\infty} = \displaystyle \lim_{t \rightarrow \infty} X(t),$ satisfying the following algebraic Riccati equation (see \cite{corless})
$$  A^TX_{\infty}+X_{\infty}A-X_{\infty}BB^TX_{\infty}+C^TC=0.$$
and $X_{\infty}$ is the only positive and stabilizing solution to this algebraic Riccati equation. 
\end{remark}

\section{Numerical examples}
In this section, in order to assess the interest of projecting the original equation before proceeding to the time integration, we compared the results given by the following approaches :

\begin{itemize}

	\item EBA-BDF($p$) method, as described in Section 5 (Algorithm 2) of the present paper.
	\item BDF($p$)-Newton-EBA:  as described in Section 4.1.
	
\end{itemize}
We also gave an example in which we compare our strategy to the Low-Rank Second-Order Splitting method (LRSOS for short) introduced by Stillfjord \cite{stillfjord15} and to the BDF-LR-ADI method introduced by Benner and Mena \cite{benner04}.
In the first three examples, the number of steps for the BDF($p$) integration scheme was set to $p=2$. 
All the experiments were performed on a laptop with an  Intel Core i7 processor and 8GB of RAM. The algorithms were coded in Matlab R2014b. \\

 \noindent {\bf Example 1}. The matrix $A$  was   obtained from the 5-point discretization of the operators 
 \begin{equation*}
     L_A=\Delta u-f_1(x,y)\frac{\partial u}{\partial x}+ f_2(x,y)\frac{\partial u}{\partial y}+g_1(x,y),
 \end{equation*}
 on the unit square $[0,1]\times [0,1]$ with homogeneous Dirichlet boundary conditions.  The number of inner grid points in each direction is  $n_0$ and the dimension of the matrix $A$ was $n = n_0^2$. Here we set $f_1(x,y) = 10xy$, $f_2(x,y)= e^{x^2y}$, $f_3(x,y) = 100y$, $f_4(x,y)= {x^2y}$ ,  $g_1(x,y) = 20y$ and $g_2(x,y)=x\,y$. The entries of the matrices $B$ and  $C$ were random values uniformly distributed on $[0 \,; 1]$ and the number of columns in $B$ and $C$ was set to $r=s=2$. The initial condition $X_0=X(0)$ was choosen as a low rank product $X_0=Z_0Z_0^T$, where $Z_0\in \mathbb{R}^{n \times 2}$ was randomly generated.
We applied our approach combining a projection onto the Extended Block Krylov subspaces followed by a BDF integration scheme to small to medium size problems, with a tolerance of $10^{-10}$ for the stop test on the residual. In our tests, we used a 2-step BDF scheme with a constant timestep $h$. The first approximate $X_1\approx X(t_0+h)$ was computed by the Implicit Euler scheme and the Extended Block Krylov method for Riccati equation, see \cite{heyouni09} for more details. 
To our knowledge, there is no available exact solution of large scale matrix Riccati differential equations in the literature. In order to check if our approaches produce reliable results, we began comparing our results to the one given by Matlab's ode23s solver which is designed for stiff differential equations. This was done by vectorizing our DRE, stacking the columns of $X$ one on top of each other. This method, based  on Rosenbrock integration scheme, is not suited to large-scale problems. The memory limitation of our computer allowed us a maximum size of $100\times 100$ for the matrix $A$ for the ode23s method.\\

\noindent In Figure \ref{Figure1}, we compared the component $X_{11}$ of the solution obtained by the methods tested in this section, to the solution provided by the ode23s method from matlab,  on the time interval $[0,1]$, for $size(A)=49\times 49$ and a constant timestep $h=10^{-3}$.
 \begin{figure}[H]
	\begin{center}
		\includegraphics[width=15cm,height=7cm]{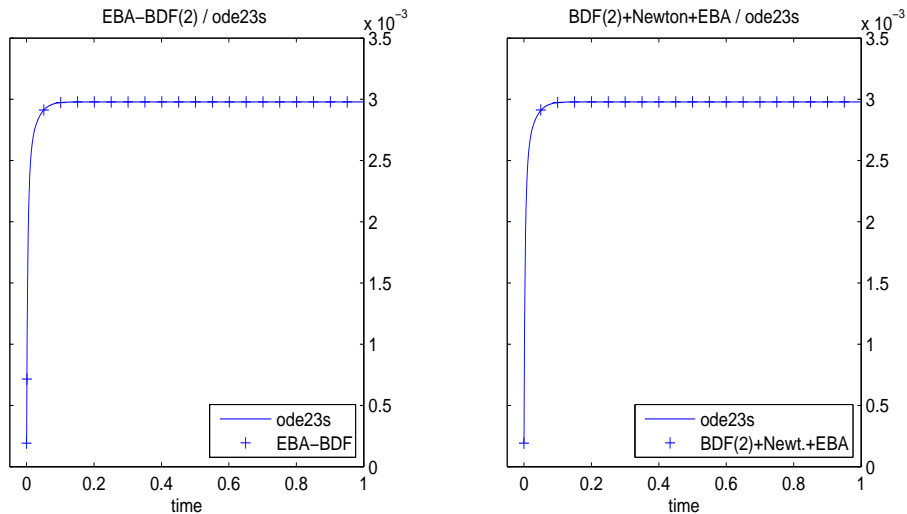}
		\caption{ Values of $X_{11}(t)$ for  $t \in [0,\, 1]$}\label{Figure1}
	\end{center}
\end{figure}
\noindent We observe that all the methods give similar results in terms of accuracy. Figure \ref{Figure2}  features the norm of the difference $X_{EBA}-X_{ode23s}$, where  $X_{EBA}$ is the solution obtained by the EBA-BDF method, at time $t=1$ versus the number of iterations of the Extended Arnoldi process. It shows that the two approximate solutions are very close for a moderate size of the projection space ($m=7$). The ode23s solver took 510 seconds whereas our method gave an approximate solution in $7$ seconds. 
\begin{figure}[h!]
	\begin{center}
\includegraphics[width=10cm,height=7cm]{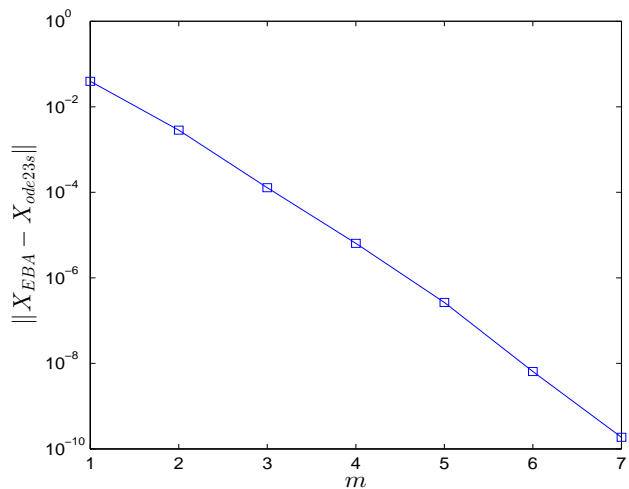}
\caption{Norms of the  errors $\|X_{EBA}(T_f)-X_{ode23s}(T_f)\|_F$ versus the  number of Extended-Arnoldi iterations $m$}\label{Figure2}
\end{center}
\end{figure}
In Table \ref{tab1}, we give  the obtained runtimes in seconds, for the resolution of Equation \eqref{ric1}  for $t \in [0; 1]$, with a timestep $h=0.001$. The figures illustrate that as the dimension of the problem gets larger, it is preferable to reduce the dimension of the problem prior to integration.

\begin{table}[h!]
\begin{center}
\begin{tabular}{c | c c c }
size($A$)&ode23s&EBA+BDF(2)&BDF(2)+Newton-EBA\\
\hline
$49 \times 49$&$30.2$&$6.5$&$37.1$\\
$100 \times 100$&$929.3$&$12.3$&$41.6$\\
$900 \times 900$&$--$&$45.2$&$926.1$\\
\hline
\end{tabular}
\caption{runtimes for ode23s, EBA+BDF(2) and BDF(2)+Newton}\label{tab1}
\end{center}
\end{table}
\noindent {\bf Example 2}.  In this example, we considered the same problem as in the previous example for medium to large dimensions. We kept the same settings as in Example 1, on the time interval  $[0,1]$. For all the tests, we applied the EBA+BDF(2) method and the timestep was set to $h=0.001$. In Table \ref{tab2}, we reported the runtimes, residual norms and the number of the Extended Arnoldi iterations for various sizes of $A$.

\begin{table}[H]
\begin{center}
\begin{tabular}{c | c c c}
size($A$)& Runtime (s)& Residual norms& Number of iterations ($m$)\\
\hline
$100\times100$&$12.3$&$3.1\times 10^{-9}$&$9$\\
$900\times900$&$45.2$&$3.2\times 10^{-8}$&$15$\\
$2500\times2500$&$85.$&$4.8\times 10^{-8}$&$19$\\
$6400\times6400$&$159.7$&$1.8\times 10^{-7}$&$24$\\
$10000\times10000$&$195.3$&$3.7\times 10^{-8}$&$26$\\
\hline
\end{tabular}
\caption{ Execution time (s), residual norms and the  number of Extended-Arnoldi iterations ($m$) for the EBA-BDF(2) method}\label{tab2}
\end{center}
\end{table}
\noindent The next figure shows the norm of the residual $R_m$ versus the number $m$ of Extended Block Arnoldi iterations for the $n=6400$ case.
 \begin{figure}[H]
\begin{center}
\includegraphics[width=10cm,height=7cm]{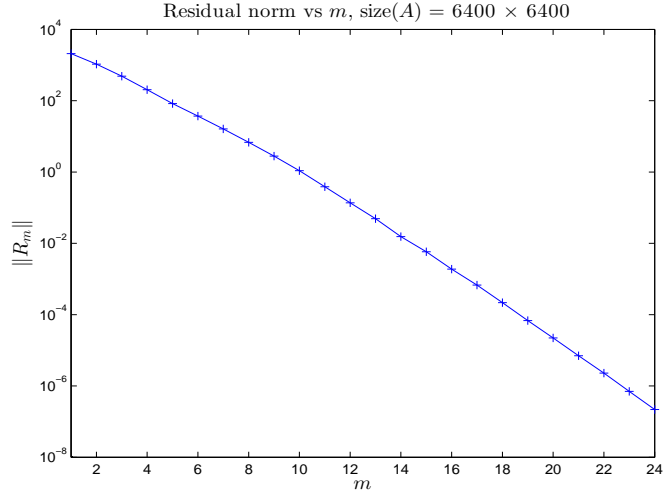}
\caption {Residual norms $\|R_m(Tf)\|$ versus number of Extended-Arnoldi iterations ($m$)}\label{figure3}
\end{center}
\end{figure}
 \noindent As  expected, Figure \ref{figure3} shows that the accuracy improves as $m$ increases. \\

\noindent {\bf Example 3} This example was  taken from  \cite{benner04} and comes from the autonomous linear-quadratic
optimal control problem of one dimensional heat flow
\begin{eqnarray*}
\frac{\partial}{\partial t} x(t,\eta) & = & \frac{\partial^2}{\partial \eta^2} x(t,\eta)+b(\eta) u(t)   \\
x(t,0) & = & x(t,1)=0, t>0\\
x(0,\eta) & = & x_0(\eta), \eta \in [0,1]\\
y(x) & = & \int_0^1 c(\eta) x(t,\eta) d \eta, x>0.
\end{eqnarray*}
Using a standard finite element approach based on the first order B-splines, we obtain the following ordinary differential equation
\begin{eqnarray}\label{ode1}
\left\{ \begin{array}{lcl}
M \dot {\tt x}(t)& = & K {\tt x}(t) + F u(t)\\
y(t) & =& C {\tt x}(t),
\end{array} \right.
\end{eqnarray}
where the matrices $M$ and $K$ are given by:
$$M=\frac{1}{6n}\left(
\begin{array}{ccccc}
4&1&&&\\
1&4&1&&\\
&\ddots&\ddots&\ddots\\
&&1&4&1\\
&&&1&4
\end{array}
\right), \;\; K=-\alpha\,n\,\left(
\begin{array}{ccccc}
2&-1&&&\\
-1&2&-1&&\\
&\ddots&\ddots&\ddots\\
&&-1&2&-1\\
&&&-1&2
\end{array}
\right).$$
Using the semi-implicit Euler method, we get  the following discrete dynamical system
$$(M-\Delta t K)\, \dot{x}(t)= M\, x(t)+ \Delta t\, F u_k.$$
We set $A=-(M-\Delta t K)^{-1}\, M$ and $B=\Delta t\, (M-\Delta t K)^{-1}\, F$. The entries of the   $ n \times s$  matrix $ F $ and the $s \times n$ matrix  $ C $ were random values uniformly distributed on $ [0,1] $. We chose the initial condition as $X_0=0_{n\times n}=Z_0Z_0^T$, where $Z_0=O_{n\times 2}$. In our experiments we used $s=2$,   $\Delta t = 0.01$   and  $\alpha=0.05$. 
 \\
In Figure \ref{figure6}, we plotted the first component $X_{11}$ over the time interval $[0,2]$ of the approximate solutions obtained by the EBA-BDF(2),  the BDF(2)-Newton-EBA methods  and the ode23s solver. It again illustrates the similarity of the results in term of accuracy. 

 \begin{figure}[H]
	\begin{center}
		\includegraphics[width=10cm,height=7cm]{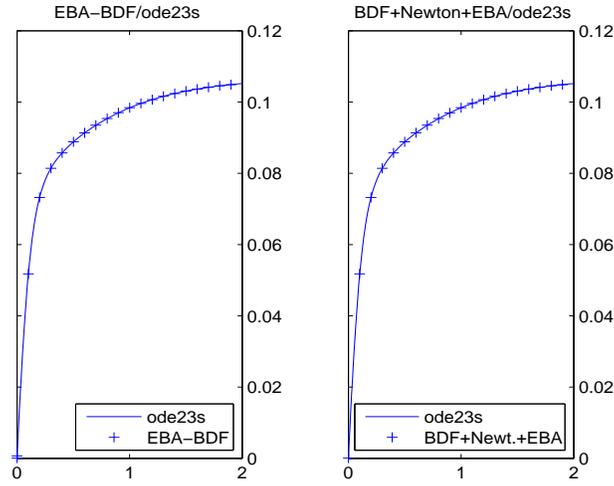}
		\caption{Computed values of $X_{11}(t)$,  $t \in [0,\,2]$ for ode23s and EBA-BDF (left) and for ode23s with BDF(2)+Newton+EBA (right) with  $size(A)=49\times 49$}\label{figure6}
	\end{center}
\end{figure}
Figure \ref{figure7} illustrates the  remark of Proposition \ref{theoJm} in Section 6, claiming that under certain conditions, we have  $\displaystyle{ \lim_{t \rightarrow \infty} X(t)=X_{\infty}}$, where $X_{\infty}$ is the only positive and stabilizing solution to the ARE: $A^TX+XA-XBB^TX+C^TC=0$. In order to do so, we plotted the norm of the difference $X_{EBA}(t)-X_{\infty}$ in function of the time parameter $t$  on a relatively large time scale ($t\in [0,50]$), for an initial value $X_0=0_n$, where $X_{EBA}$ is the matrix computed by the EBA-BDF(2) method. In this example, we have $\|X_{EBA}(50)-X_{\infty}\|\approx 4\times 10^{-5}$. 

 \begin{figure}[H]
	\begin{center}
		\includegraphics[width=10cm,height=7cm]{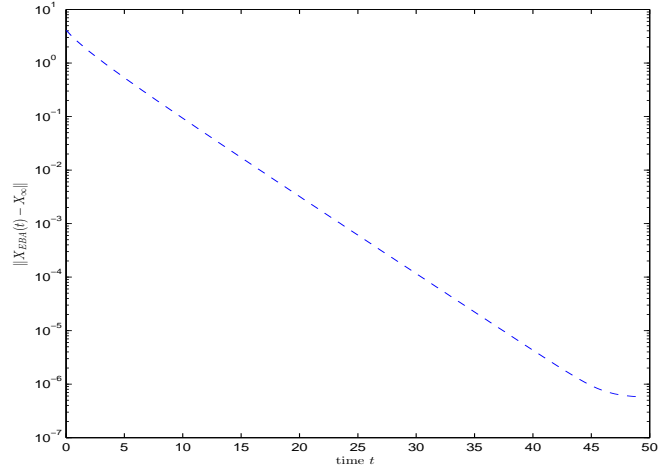}
		\caption{ Error norms $\parallel X_{EBA-BDF}(t)-X_{\infty}\parallel$ for $t \in [0,\, 50]$ with  $size(A)=400\times 400$.}\label{figure7}
	\end{center}
\end{figure}
\noindent Table \ref{tab3} reports on  the computational times, residual norms and number of Krylov iteration,  for various sizes of the coefficient matrices on the time interval $[0,1]$.  We only reported  the results given for the EBA-BDF(2) method which clearly outperfoms the BDF(2)-Newton EBA method in terms of computational time. For instance,  in the case where $size(A)=1600 \times 1600$ case, the BDF-Newton EBA method  required $521$ seconds and more than $5000$ seconds for $size(A)=2500 \times 2500$.

\begin{table}[H]
	\begin{center}
		\begin{tabular}{c | c c c}
			size($A$)&Runtime (s)&Residual norms&number of iterations ($m$)\\
			\hline
			$1600\times1600$&$14.2$&$3.2\times 10^{-12}$&$10$\\
			$2500\times2500$&$17.1$&$7\times 10^{-12}$&$9$\\
			$4900\times4900$&$25.6$&$1.3\times 10^{-11}$&$9$\\
			$6400\times6400$&$42.2$&$8.5\times 10^{-12}$&$10$\\
			$10000\times10000$&$143.9$&$4.5\times 10^{-11}$&$8$\\
			\hline
		\end{tabular}
		\caption{EBA-BDF(2) method: runtimes (s), residual norms, number of Arnoldi iterations ($m$).}\label{tab3}
	\end{center}
\end{table}

\noindent {\bf Example 4} In this last example, we applied the EBA-BDF(1) method to the well-known problem Optimal Cooling of Steel Profiles. The matrices were extracted from the IMTEK collection \footnote{https://portal.uni-freiburg.de/imteksimulation/downloads/benchmark}. We compared the EBA-BDF(1) method to the splitting method LRSOP (in its Strang splitting variant) \cite{stillfjord15} and the BDF(1)-LR-ADI method \cite{benner04}, for sizes $n=1357$ and $n=5177$, on the time interval $[0\,;5]$. The initial value $X_0$ was choosen as $X_0=Z_0Z_0^T$, with $Z_0=0_{n\times 1}$. The code for the LRSOP method contains paralell loops which used 4 threads on our machine for this test.  The BDF(1)-LR-ADI consists in applying the BDF(1) integration scheme to the original DRE \eqref{ric1}. As for the BDF(p) Newton-EBA method presented in Section 4.1, it implies the numerical resolution of a potentially large-scale algebraic Riccati equation for each timestep. This resolution was performed via a the LR-ADI method, available in the M.E.S.S Package \footnote{http://www.mpi-magdeburg.mpg.de/projects/mess/}. For the LRSOS and BDF(1)-LR-ADI, the tolerance for column compression strategies were set to $10^{-8}$. The timestep was set to $h=0.01$ and the tolerance for the Arnoldi stop test was set to $10^{-7}$ for the EBA-BDF(1) method and the projected equations were numerically solved by a dense solver ({\tt care} from matlab) every 3 extended Arnoldi iterations.
\vspace{0.2cm}

\begin{table}[H]
	\begin{center}
		\begin{tabular}{c | c c c c c}
			size $n$&EBA-BDF(1) & LRSOS&BDF(1)-LR-ADI&  LRSOS vs EBA-BDF(1) &EBA-BDF(1) vs BDF(1)-LR-ADI  \\
			\hline
			 	$1357$& $130.4$ s&$145.5$ s&$259.2$ s&$3.9\times 10^{-4}$&$7.2\times 10^{-10}$\\
			\hline
			 $5177$& $578$ s&$919.4$ s&$6482.3$ s&$5.2\times 10^{-4}$&$5.5\times 10^{-8}$\\
			\hline
			
		\end{tabular}
		\caption{Execution times for the Optimal Cooling of Steel Profiles }\label{tab4}
	\end{center}
\end{table}
\noindent In Table \ref{tab4},  we listed the obtained runtimes and  the relative  errors $\|X_{EBA-BDF{1}}(t_f)-X_{*}(t_f)\|_F\,/\,\|X_{EBA-BDF(1)}(t_f)\|_F$,  where $*$ denotes LRSOP or BDF(1)-LR-ADI, at final time $t_f=5$.
\noindent  For the EBA-BDF(1), the number $m$ of Arnoldi iterations and the residual norms were $m=18$, $\|R_m\|=1.1\times 10^{-7}$ for $n=1357$ and $m=27$, $\|R_m\|=5.4\times 10^{-6}$ for $n=5177$.  This example shows that our method EBA-BDF is interesting in terms of execution time. 

\section{Conclusion}
In this article, we have proposed a strategy consisting in projecting a large-scale differential problem onto a sequence of extended block Krylov subspaces and solving the projected matrix differential equation by a BDF integration scheme. The main purpose of this work was to show that projecting the original problem before integration gives encouraging results. In the case where the matrix $A$ is not invertible (or ill-conditioned), which is out of the scope of this paper,  one could  contemplate the use of the block-Arnoldi algorithm instead of the extended-block-Arnoldi method. We gave some theoretical results  such as a simple expression of the residual which does not require the computation of products of large matrices and  compared our approach experimentally to the common approach for which the integration scheme is directly applied to the large-scale original problem. Our experiments have shown that projecting before performing the time integration  is interesting in terms of computational time.   

\vspace{0.5cm}

\noindent \textbf{Acknowledgements} We would like to thank Dr. Tony Stillfjord for providing us with the codes of the low-rank second-order splitting (LRSOP) method and for his insightfull comments.

\bibliographystyle{plain}

\begin{thebibliography}{99}
%
\bibitem{abou03}
{\sc H. Abou-Kandil, G. Freiling, V. Ionescu, G. Jank}, {\em Matrix Riccati Equations in Control and Sytems Theory}, in Systems \& Control Foundations \& Applications, Birkhauser, (2003).
%
\bibitem{anderson71}
{\sc B.D.O. Anderson, J.B. Moore}, {\em Linear Optimal Control}, Prentice-Hall, Englewood Cliffs, NJ, (1971).
%

\bibitem{arias07}
{\sc E. Arias, V. Hern\'andez, J.J. Ib\'a\~nez and J. Peinado}, {\em A fixed point-based BDF method for solving differential Riccati equations}, Appl. Math. and Comp., (188):1319--1333, (2007).

\bibitem{arnold84}
{\sc W.F. Arnold III, A.J. Laub}, {\em Generalized eigenproblem algorithms and software for algebraic Riccati equations}, Pro., (72):1746--1754, (1984).

\bibitem{astrom}
 {\sc K. J.  Astr$\ddot{o}$m}, {\em Introduction to stochastic control theory. New York:Academic Press, 1970.}


\bibitem{benner98}
{\sc P. Benner, R. Byers}, {\em An exact line search method for solving generalized continous algebraic Riccati equations}, IEEE Trans. Automat. Control, 43, (1):101--107, (1998).


\bibitem{benner04}
{\sc P. Benner, H. Mena},{\em BDF methods for large-scale differential Riccati equations},{Proceedings of Mathematical Theory of Network and Systems (MTNS 2004)}, (2004).



\bibitem{bouhamidi16}
{\sc A. Bouhamidi, M. Hached, K. Jbilou}
  {\em A Preconditioned Block Arnoldi Method for Large Scale Lyapunov and Algebraic Riccati Equations},
  Journal of Global Optimization, volume 65, 1, 19--23, (2016)
  

\bibitem{byers87}
{\sc R. Byers}, {\em Solving the algebraic Riccati equation with the matrix sign function}, Linear Alg. Appl., 85:267--279, (1987).

\bibitem{camo}
{\sc G. Caviglia, A. Morro}, {\em Riccati equations for wave propagation in planary-stratified solids}, Eur. J. Mech. A/Solids, 19:721--741, (2000).

\bibitem{casti87}
{\sc J.L. Casti}, {\em Linear Dynamical Systems}, Mathematics in Science and Engineering, Academic Press, (1987).

\bibitem{chehab15}
{\sc J.P. Chehab, M. Raydan}, {\em Inexact Newton's method with inner implicit preconditioning of algebraic Riccati equations}, Comp. App. Math., 1--15, (2015), doi:10.1007/s40314-05-0274-8.

\bibitem{corless}
{\sc M. J. Corless and A. E. Frazho}, {\em Linear systems and control - An operator perspective}, Pure and Applied Mathematics. Marcel Dekker, New York-Basel, 2003.
%
\bibitem{datta}
{\sc B.N. Datta}, {\em Numerical Methods for Linear Control Systems Design and Analysis}, Elsevier Academic Press, (2003).


\bibitem{dieci92}
{\sc L. Dieci}, {\em Numerical Integration of the Differential Riccati Equation and Some Related Issues }, SIAM J. NUMER. ANAL., Vol. 29, No. 3, pp. 781-815, (1992).
%

\bibitem{druskin98}
{\sc V. Druskin, L. Knizhnerman}, {\em Extended Krylov subspaces: approximation of the matrix square root and related functions}, SIAM J. Matrix Anal. Appl., 19(3):755--771, (1998).

\bibitem{guo98}
{\sc A.C.H. Guo, P. Lancaster}, {\em Analysis and modification of Newton's method for algebraic Riccati equations}, Math. Comp., 67:1089--1105, (1998).


\bibitem{heyouni09}
{\sc M. Heyouni, K. Jbilou}, {\em An Extended Block Arnoldi algorithm for Large-Scale Solutions of the Continuous-Time Algebraic Riccati Equation}, Electronic Transactions on Numerical Analysis,
Volume 33, 53--62, (2009).

\bibitem{jbilou03}
{\sc K. Jbilou}, {\em Block Krylov subspace methods for large continuous-time algebraic Riccati equations},  Num. Alg., (34):339--353, (2003).
%
\bibitem{jbilou06}
{\sc K. Jbilou}, {\em An Arnoldi based algorithm for large algebraic Riccati equations},  Applied Mathematics Letters, (19):437--444, (2006).


\bibitem{kenny92}
{\sc C.S. Kenny, A.J. Laub, P. Papadopoulos}, {\em Matrix sign function algorithms for Riccati equations}, In Proceedings of IMA Conference on Control: Modelling, Computation, Information, IEEE Computer Society Press, p:1--10, Southend-On-Sea, (1992).
\bibitem{kleinman68}
{\sc D.L. Kleinman}, {\em On an iterative technique for Riccati equation computations}, IEEC Trans. Autom. Contr., (13):114--115, (1968).

\bibitem{lancaster95}
{\sc P. Lancaster, L. Rodman}, {\em The Algebraic Riccati Equations}, Clarendon Press, Oxford, 1995.

\bibitem{mehrmann91}
{\sc V. Mehrmann}, {\em The autonomous Linear Quadratic Control Problem, theory and Numerical Solution}, Number in Lecture Notes in Control and Information Sciences, Springer-Verlag, Heidelberg, (1991).
%




\bibitem{roberts98}
{\sc D.J. Roberts},  {\em Linear model reduction and solution of the algebraic Riccati equation by use of the sign function}, Internt. J. Control, 32:677--687, (1998).

\bibitem{simoncini07}
{\sc V. Simoncini}, {\em A new iterative method for solving large-scale Lyapunov matrix equations}, SIAM J. Sci. Comp., 29(3):1268--1288, (2007).


\bibitem{stillfjord15}
{\sc T. Stillfjord}, {\em Low-rank second-order splitting of large-scale differential Riccati equations},  
 IEEE Trans. Automat. Control 60(10),  2791--2796  (2015).



\bibitem{dooren81}
{\sc P. Van Dooren}, {\em A generalized eigenvalue approach for solving Riccati equations}, SIAM J. Sci. Stat. Comput., 2:121--135, (1981).

%
%
\end{thebibliography}
\end{document}